\newtheorem{theorem}{Theorem}[section]
\newtheorem{lemma}[theorem]{Lemma}
\newtheorem{proposition}[theorem]{Proposition}
\theoremstyle{definition}
\theoremstyle{remark}
\numberwithin{equation}{section}
\newcommand{\NN}{\mathbb{N}}
\newcommand{\CC}{\mathbb {C}}
\begin{document}
\setcounter{page}{1}
\title[Essential norm of the differential operator ]{Essential norm of the differential operator  }
\author [Tesfa  Mengestie]{Tesfa  Mengestie }
\address{ Western Norway University of Applied Sciences \\
Mathematicas section Sciences
Klingenbergvegen 8, N-5414 Stord, Norway}
\email{Tesfa.Mengestie@hvl.no}
\thanks{The author  is partially supported by HSH grant 1244/ H15.}
\subjclass[2010]{Primary 47B32, 30H20; Secondary 46E22,46E20,47B33 }
 \keywords{Generalized Fock spaces, Essential norm, Norm,  Bounded, Compact, Differential operator, Composition operator}
\begin{abstract}
This paper is a follow-up contribution to our work \cite{TM6} where we  studied some spectral properties of
the differential operator $D$ acting between generalized Fock spaces $\mathcal{F}_{(m,p)}$ and $\mathcal{F}_{(m,q)}$ when both exponents $p$ and $q$ are finite. In this note we continue to study the properties for the case when at least one of the  spaces is growth type.  We also  estimate
the   essential  norm of $D: \mathcal{F}_{(m,p)}\to \mathcal{F}_{(m,q)}$ for all $1\leq p, q\leq \infty$, and showed that if the operator fails to be compact, then its essential norm  is comparable to the operator norm and $\|D\|_e \simeq \big|m^{2+p}-m^{1+p}\big|^{\frac{1}{p}}\simeq   \|D\|.$
\end{abstract}

\maketitle

\section{Introduction} \label{1}
The differential operator $Df= f'$ is one of the fundamental operators in function related operator theory. However, the operator  is known to act in a discontinuous  fashion on many Banach spaces  including  on the classical Fock spaces, weighted Fock spaces where the weight decays faster than the classical Gaussian weight \cite{TM3}, on Fock--Sobolev spaces which are typical examples of weighted Fock spaces where the weight decays slower than the  Gaussian weight \cite{TM5}. In light of this, we  studied  the question of  how  slower   must  the weight function decay on generalized Fock spaces under which the operator $D$ admits some basic spectral structures \cite{TM6}, and found out that  the weight should in fact  decay much slower than the  classical Gaussian weight $e^{-|z|^2}$.   More precisely,  for $m>0$, we  considered  a class of generalized  Fock   spaces $\mathcal{F}_{(m,p)}$   which consist of all entire functions $f$ for which
\begin{align*}
\|f\|_{(m, p)}^p= \int_{\CC} |f(z)|^p e^{-p|z|^m} dA(z) <\infty,
\end{align*} where   $dA$ denotes the
usual Lebesgue area  measure on $\CC$.   Then for $0<p\leq q<\infty$,   it was proved that  $D: \mathcal{F}_{(m,p)} \to \mathcal{F}_{(m,q)}$ is bounded if and only if 
\begin{align}
\label{bounded}
m\leq 2-\frac{pq}{pq+q-p}
\end{align} and in this case  the norm is estimated  as
\begin{align}
\label{boundednorm}
\|D\|\simeq \begin{cases}
|m^{2+p}-m^{1+p}|^{\frac{1}{p}}\sup_{w\in \CC} \big(1+|w|\big)^{(m-1)+\frac{(q-p)(m-2)}{qp}}, & m\neq1.\\
 1,  & m=1
 \end{cases}
\end{align}
Compactness has been  described by the strict inequality \eqref{bounded} while the corresponding  equivalent condition for the case when  $p>q$  has been found to be \begin{align}
\label{bounded1}
m<1-2 \Big(\frac{1}{q}-\frac{1}{p}\Big)\end{align}  which is yet stronger than \eqref{bounded} and  in addition, forces boundedness to imply compactness.

One of the main purposes of this note is to  study  the situation when one of the Fock type spaces $\mathcal{F}_{(m,p)}$ is replaced by the natural growth type  spaces  $\mathcal{F}_{(m, \infty)}$ which consist of entire functions $f$ for which
\begin{align*}
 \|f\|_{(m, \infty)}= \sup_{z\in \CC} |f(z)|e^{-|z|^m}<\infty.
\end{align*}
For this,  our first result  below shows that  the weight function $|z|^m$ can grow  at most  as a complex polynomial of degree not exceeding $2-\frac{p}{p+1}$.
\begin{theorem}\label{thm1}
\begin{enumerate}
\item Let $0<p<\infty$ and $m>0$. Then  $D: \mathcal{F}_{(m,p)} \to \mathcal{F}_{(m,\infty)}$ is
\begin{enumerate}
\item bounded if and only if \begin{align}
\label{boundedinfinity}
m\leq  2-\frac{p}{p+1}\end{align} and  the norm is estimated by
\begin{align}
\label{boundedinfinitynorm}
\|D\| \overset{\footnotemark}{\simeq} \begin{cases}
|m^{2+p}-m^{1+p}|^{\frac{1}{p}}\sup_{w\in \CC} (1+|w|)^{\frac{m(p+1) -(p+2)}{p}}, & m\neq1\\
1, & m=1
\end{cases}.
\end{align}
\footnotetext{The  notation $U(z)\lesssim V(z)$ (or
equivalently $V(z)\gtrsim U(z)$) means that there is a constant
$C$ such that $U(z)\leq CV(z)$ holds for all $z$ in the set of a
question. We write $U(z)\simeq V(z)$ if both $U(z)\lesssim V(z)$
and $V(z)\lesssim U(z)$.}
\item compact if and only if\begin{align*}m< 2-\frac{p}{p+1}.\end{align*}
\end{enumerate}
\item Let $0<p< \infty$ and $m>0$. Then the following statements are equivalent.
\begin{enumerate}
\item $D: \mathcal{F}_{(m,\infty)} \to \mathcal{F}_{(m,p)}$ is bounded;
\item $D: \mathcal{F}_{(m,\infty)} \to \mathcal{F}_{(m,p)}$ is compact;
\item It holds that\begin{align}
\label{compact1}
 m <1-\frac{2}{p}.\end{align}
\end{enumerate}
\end{enumerate}
\end{theorem}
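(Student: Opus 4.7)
My plan is to follow the template that \cite{TM6} used for the finite-exponent case, replacing the $L^q$-integral on the target side by a supremum norm (for part (1)) or the domain-side $L^p$-integral by the supremum norm (for part (2)). The two main inputs are the standard pointwise growth estimates available on each of $\mathcal{F}_{(m,p)}$ and $\mathcal{F}_{(m,\infty)}$, and Cauchy's integral formula, used to pass from an estimate on $f$ to one on $f'=Df$. The collapse of boundedness, compactness and condition \eqref{compact1} in part (2) into a single equivalence comes for free from the fact that \eqref{compact1} is strict, so there is no borderline case to distinguish.

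\textbf{Part (1).} I first prove the upper bound: starting from $|f(w)|\lesssim (1+|w|)^{(m-2)/p}e^{|w|^m}\|f\|_{(m,p)}$, valid on $\mathcal{F}_{(m,p)}$, I apply Cauchy on a disc of radius $r_z\simeq (1+|z|)^{1-m}$ around $z$, chosen so that $e^{|w|^m}\simeq e^{|z|^m}$ throughout the disc. This yields
\[
|f'(z)|e^{-|z|^m}\lesssim (1+|z|)^{(m-1)+(m-2)/p}\|f\|_{(m,p)},
\]
so that $\|Df\|_{(m,\infty)}\lesssim \sup_w(1+|w|)^{[m(p+1)-(p+2)]/p}\|f\|_{(m,p)}$, and the supremum is finite precisely under \eqref{boundedinfinity}. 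For the matching lower bound and for necessity I test against monomials $f_n(z)=z^n$: their $\mathcal{F}_{(m,p)}$-norms reduce after polar coordinates to a Gamma value, while $\|Df_n\|_{(m,\infty)}=n\max_{r\ge 0}r^{n-1}e^{-r^m}$ is attained at the explicit critical point $r^m=(n-1)/m$. Stirling asymptotics together with the tracking choice $n=n(w)\simeq m|w|^m$ then convert the ratio into the asserted product $|m^{2+p}-m^{1+p}|^{1/p}(1+|w|)^{[m(p+1)-(p+2)]/p}$. For the compactness assertion (1)(b), sufficiency comes from the standard cut-off: for a bounded sequence $\{f_k\}\subset \mathcal{F}_{(m,p)}$ converging to zero uniformly on compact sets, I split $\sup_{z\in\CC}$ into the regions $\{|z|\le R\}$ and $\{|z|>R\}$; the first piece vanishes by uniform convergence and the second by the now strictly negative exponent as $R\to\infty$, while necessity uses the monomial sequence suitably normalized.

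\textbf{Part (2).} The simpler pointwise bound $|f(z)|\lesssim e^{|z|^m}\|f\|_{(m,\infty)}$ combined with Cauchy on the same scale produces $|f'(z)|\lesssim (1+|z|)^{m-1}e^{|z|^m}\|f\|_{(m,\infty)}$, hence
\[
\|Df\|_{(m,p)}^p\lesssim \|f\|_{(m,\infty)}^p\int_{\CC}(1+|z|)^{p(m-1)}\,dA(z),
\]
and the integral converges precisely under \eqref{compact1}, giving (iii)$\Rightarrow$(i). The same pointwise domination turns (iii)$\Rightarrow$(ii) into a dominated convergence argument applied to sequences tending to zero uniformly on compacts. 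The implication (i)$\Rightarrow$(iii) again rests on monomial test functions, while (ii)$\Rightarrow$(i) is trivial, closing the cycle.

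\textbf{Main obstacle.} The technically delicate step will be the lower-bound and necessity in part (1)(a): extracting the exact constant $|m^{2+p}-m^{1+p}|^{1/p}$ and the precise exponent $[m(p+1)-(p+2)]/p$ from the Stirling expansion of $\Gamma\bigl((np+2)/m\bigr)$ after the optimal choice of $n=n(w)$, and then handling the degenerate case $m=1$ in which the explicit factor vanishes and a separate argument is needed to recover $\|D\|\simeq 1$. The rest is broadly parallel to the finite-$q$ analysis in \cite{TM6}.
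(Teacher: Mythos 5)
Your sufficiency arguments are fine and essentially parallel the paper's (the paper gets the same bounds from the subharmonic mean-value estimate \eqref{pointwise} together with the Littlewood--Paley formula \eqref{paley}, rather than a frozen-weight Cauchy estimate, but that difference is cosmetic). The genuine gap is in every necessity direction, because monomials are not extremal for these norms and provably cannot reach the stated thresholds. Compute them: $\|z^n\|_{(m,p)}^p=\tfrac{2\pi}{m}\,p^{-(np+2)/m}\,\Gamma\bigl(\tfrac{np+2}{m}\bigr)$ while $\|Dz^n\|_{(m,\infty)}=n\bigl(\tfrac{n-1}{m}\bigr)^{(n-1)/m}e^{-(n-1)/m}$, and Stirling gives
\begin{align*}
\frac{\|Dz^n\|_{(m,\infty)}}{\|z^n\|_{(m,p)}}\;\simeq\; n^{\,1-\frac1m-\frac{2}{mp}+\frac{1}{2p}},
\end{align*}
which stays bounded (indeed tends to $0$) for all $m<\frac{2(p+2)}{2p+1}$, whereas the boundedness threshold you must detect is $m>\frac{p+2}{p+1}=2-\frac{p}{p+1}$, and $\frac{2(p+2)}{2p+1}>\frac{p+2}{p+1}$. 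So on the whole range $\frac{p+2}{p+1}<m<\frac{2(p+2)}{2p+1}$ the operator is unbounded but your test family does not see it; equivalently, with your tracking $n\simeq m|w|^m$ the ratio is $(1+|w|)^{\frac{m(p+1)-(p+2)}{p}-\frac{m}{2p}}$, short of the asserted product by the factor $(1+|w|)^{m/(2p)}$ that is lost because the mass of $|z|^ne^{-|z|^m}$ spreads over a full annulus of area $\simeq|w|^{2-m/2}$ instead of a single disc of area $\tau_m^2(w)\simeq|w|^{2-m}$. The same defect kills your borderline non-compactness claim: at $m=2-\frac{p}{p+1}$ the normalized monomial ratio decays like $n^{-1/(2p)}$. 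What is needed (and what the paper uses) are the localized Borichev--Dhuez--Kellay peak functions $f_{(w,R)}$ of \eqref{test000}--\eqref{test}, with $\|f_{(w,R)}\|_{(m,p)}\simeq\tau_m(w)^{2/p}$, which give exactly the missing factor.

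For part (2) the situation is worse: the implication (i)$\Rightarrow$(iii) cannot be obtained from any single test function, monomial or localized. Monomials give unboundedness only for $m>\frac{2(p-2)}{2p-1}$ and the peak functions only for $m>\frac{p-2}{p-1}$, both strictly larger than the required threshold $1-\frac2p=\frac{p-2}{p}$, so neither closes the gap $\frac{p-2}{p}\le m\le\frac{2(p-2)}{2p-1}$. The paper's proof of this step is genuinely global: it builds $F_t=\sum_k a_k r_k(t) f_{(z_k,R)}$ over the covering lattice of Lemma~\ref{lem4}, uses the $\ell^\infty$-to-$\mathcal F_{(m,\infty)}$ bound of Lemma~2.4 of \cite{TM3} together with Rademacher functions and Khinchine's inequality (Luecking's technique), and converts boundedness of $D$ into the integrability condition $\int_\CC(1+|z|)^{p(m-1)}dA(z)<\infty$, i.e.\ $p(m-1)<-2$. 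Your proposal contains no substitute for this construction, and your remark that the equivalence of (i)--(iii) "comes for free" from strictness of \eqref{compact1} is not an argument: the content is precisely that boundedness already forces the strict integral condition. The directions (iii)$\Rightarrow$(i),(ii) and your compactness sufficiency in part (1) are correct as sketched.
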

Before going further, we want to  remark that the  study in \cite{TM6} was initiated  in a quest for answering the question of  how fast should the associated weight function on generalized Fock  spaces decay  in order that  the operator  $D$ admits some basic spectral structures. Now   Theorem~\ref{thm1} and the corresponding result in \cite{TM6} provide  a clear description  for  its  decay, namely that the weight should decay in all cases  much slower than the classical Gaussian weight as  precisely specified in \eqref{bounded}, \eqref{bounded1}, \eqref{boundedinfinity}, and \eqref{compact1}. On the other hand, when $p=q= \infty$, as can be  seen from  \eqref{boundedinfinity},  $D: \mathcal{F}_{(m,\infty)} \to \mathcal{F}_{(m,\infty)}$ is bounded if and only if $m\leq 1$ and compactness is described by the strict inequality  $m<1$. This particular case follows  also  from many other  related works for example in \cite{Maria, Harutyunyan}.

 Another  purpose of this note is to show that  the differential operator on
 the generalized Fock spaces  $\mathcal{F}_{(m,p)}$ allows a small variety of qualitative non-compact behaviour compared to that of arbitrary boundedness. This rigidity property is measured using  essential
  norm of the operator and  showed that its essential norm is equivalent to the  operator norm. Recall that for two Banach spaces $\mathscr{H}_1$ and $\mathscr{H}_2$   the
 essential norm
$\|T\|_e$ of a bounded linear operator $T:\mathscr{H}_1  \to  \mathscr{H}_2 $
is defined as the distance from $T$ to the space of compact
operators  from  $\mathscr{H}_1$  to  $\mathscr{H}_2:$
\begin{align}
\label{def}
\|T\|_e= \inf_{K} \big\{\|T-K\|; \ \ K: \mathscr{H}_1 \to \mathscr{H}_2\text{ is a compact operator }\big\}.
\end{align}
In particular, \eqref{def} implies that  $T$ is compact if and only if its essential norm vanishes. Thus,  the essential norm can be interpreted as a quantity that  provides  a useful measure  for the  noncompactness of  operators. Several authors have studied such norms for several  operators on
 various functional spaces  including the Hardy spaces, Bergaman spaces, and Fock spaces; see for example \cite{ZZHH,ZZH,TM4,JR,Shapiro,SS, UEKI2}.  We prove the following estimates for  $D$ acting between the spaces $\mathcal{F}_{(m,p)}$.
\begin{theorem}\label{thm2}
 Let $1\leq p\leq q\leq \infty$ and  $D: \mathcal{F}_{(m,p)} \to \mathcal{F}_{(m,q)}$ is
 bounded. Then
\begin{align}
\label{estimate}
 \|D\|_e \simeq
 \begin{cases}|m^{2+p}-m^{1+p}|^{\frac{1}{p}}, \  \ \ \ \ q <\infty \ \text{and}\ m= 2-\frac{pq}{pq+q-p}\\
   1, \ \ \ \  \quad q = \infty= p, \  \text{and}\  m=1\\
    |m^{2+p}-m^{1+p}|^{\frac{1}{p}}, \   \ \ \ \ q = \infty, \ p<\infty,  \ \text{and}\  m= 2-\frac{p}{p+1}\\
 0,  \ \ \ \  \quad  \text{otherwise}.
  \end{cases}
\end{align}
\end{theorem}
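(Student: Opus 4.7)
The estimate \eqref{estimate} splits into four cases. The ``otherwise'' branch is immediate: outside the three boundary configurations the hypotheses of Theorem~\ref{thm1} and the compactness characterisations of \cite{TM6} force $D$ to be compact, so $\|D\|_e=0$. In the three remaining cases, which are exactly the equality configurations of the boundedness conditions \eqref{bounded}, \eqref{boundedinfinity}, the task reduces to proving the two-sided equivalence $\|D\|_e\simeq \|D\|$, the latter already comparable to $|m^{2+p}-m^{1+p}|^{1/p}$ (respectively to $1$) by the appropriate norm formula.

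\textbf{Upper bounds.} The inequality $\|D\|_e\leq \|D\|$ is always available, so it suffices to evaluate \eqref{boundednorm}--\eqref{boundedinfinitynorm} at the critical $m$. A direct arithmetic check shows that the exponent inside the relevant supremum collapses to zero; for instance at $m=2-\frac{pq}{pq+q-p}$ one computes
\[
(m-1)+\frac{(q-p)(m-2)}{qp}=\frac{q-p}{pq+q-p}-\frac{q-p}{pq+q-p}=0,
\]
and an identical cancellation occurs for $m=2-\frac{p}{p+1}$ in \eqref{boundedinfinitynorm}. Thus the supremum is a constant and $\|D\|\simeq |m^{2+p}-m^{1+p}|^{1/p}$; the case $p=q=\infty$, $m=1$ is covered by $\|D\|\simeq 1$ recalled after Theorem~\ref{thm1}.

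\textbf{Lower bounds via test sequences.} The substantive direction is to produce a norm-one sequence $\{f_n\}\subset \mathcal{F}_{(m,p)}$ that tends to zero uniformly on compact subsets of $\CC$ and satisfies $\liminf_n \|Df_n\|_{(m,q)}\gtrsim |m^{2+p}-m^{1+p}|^{1/p}$. Once this is available, the standard argument applies: for any compact $K$ the uniform-on-compacta decay forces $\|Kf_n\|_{(m,q)}\to 0$, so
\[
\|D-K\|\geq \limsup_n \|(D-K)f_n\|_{(m,q)}\geq \liminf_n \|Df_n\|_{(m,q)},
\]
and taking the infimum over compact $K$ yields the required lower bound on $\|D\|_e$. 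The natural test sequence is the normalized monomial family $f_n(z)=z^n/\|z^n\|_{(m,p)}$: polar coordinates give the exact gamma-function expression
\[
\|z^n\|_{(m,p)}^p=\frac{2\pi}{m}\,p^{-(n+2)/m}\,\Gamma\!\Big(\tfrac{n+2}{m}\Big),
\]
with the analogous formula for $\|z^{n-1}\|_{(m,q)}$ (and the closed form $(n/m)^{n/m}e^{-n/m}$ when $q=\infty$). Substituting into $\|Df_n\|_{(m,q)}=n\,\|z^{n-1}\|_{(m,q)}/\|z^n\|_{(m,p)}$ and applying Stirling at the critical $m$ should produce, up to a constant, the target quantity $|m^{2+p}-m^{1+p}|^{1/p}$; the cancellation that eliminates the $n$-dependence is essentially the same one that killed the supremum-exponent in the upper bound.

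\textbf{Main obstacles.} The most delicate point is justifying $\|Kf_n\|_{(m,q)}\to 0$ for every compact $K$. For $1\leq p<\infty$ this follows from showing $f_n\to 0$ weakly, which in turn reduces to testing against Taylor-coefficient functionals and point evaluations, both total on $\mathcal{F}_{(m,p)}$. For $p=\infty$ the space is non-separable and no direct weak-null argument is available; I would instead work inside the little-oh subspace $\mathcal{F}_{(m,\infty),0}$, use weak-$*$ duality against its $L^1$-type predual and exploit that $|z|^n e^{-|z|^m}\to 0$ as $|z|\to \infty$. The second, more computational, obstacle is the Stirling bookkeeping required to read off the precise constant $|m^{2+p}-m^{1+p}|^{1/p}$ from the gamma-function asymptotics at the critical $m$; I expect the same arithmetic cancellation to reappear here, but extracting the sharp constant rather than a mere order of magnitude will be the hardest step of the argument.
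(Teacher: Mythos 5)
Your treatment of the ``otherwise'' case and of the upper bounds is fine; in fact the observation that $\|D\|_e\le\|D\|$ together with the vanishing of the exponent in \eqref{boundednorm} and \eqref{boundedinfinitynorm} at the critical $m$ is simpler than the paper's route, which instead approximates $D$ by the compact operators $D\circ C_{\Phi_k}$, $\Phi_k(z)=\tfrac{k}{k+1}z$ (Proposition~\ref{compact}), and controls $\|D-D\circ C_{\Phi_k}\|$ by splitting $\CC$ into $|z|\le r$ and $|z|>r$ and using Cauchy estimates. The genuine gap is in your lower bound: normalized monomials are the wrong test functions whenever $p<q$, which is exactly the situation in the third line of \eqref{estimate} and in the first line unless $p=q$. (A minor slip first: $\|z^n\|_{(m,p)}^p=\frac{2\pi}{m}\,p^{-(np+2)/m}\,\Gamma\big(\frac{np+2}{m}\big)$, not with exponent $\frac{n+2}{m}$.) With the correct formula and Stirling, writing $\delta=\frac1p-\frac1q\ge 0$, one finds
\begin{align*}
\|Df_n\|_{(m,q)}=\frac{n\,\|z^{n-1}\|_{(m,q)}}{\|z^{n}\|_{(m,p)}}\simeq n^{\,1-\frac1m-\delta\big(\frac2m-\frac12\big)},
\end{align*}
and at the critical value $m=\frac{1+2\delta}{1+\delta}$ (which is $2-\frac{pq}{pq+q-p}$ for $q<\infty$ and $2-\frac{p}{p+1}$ for $q=\infty$) the exponent equals $-\delta/2$. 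Hence $\|Df_n\|_{(m,q)}\to 0$ for every $p<q$; for instance $p=1$, $q=\infty$, $m=\frac32$ gives decay like $n^{-1/2}$, while the theorem asserts $\|D\|_e\simeq\frac98$. So no amount of Stirling bookkeeping can extract the constant: your sequence simply does not witness non-compactness. The reason is geometric: $|z|^ne^{-|z|^m}$ spreads its mass over an annulus of radius $r_n=(n/m)^{1/m}$ and area $\simeq r_n^{2-m/2}$, which is much larger than the disks $D(w,\tau_m(w))$ of area $\simeq |m^2-m|^{-1}|w|^{2-m}$ where the extremal behaviour occurs, and for $q>p$ this spreading is penalized by exactly the factor $n^{-\delta/2}$.

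What is needed instead is a localized weakly null family, and this is what the paper uses: the normalized functions $f^*_{(w,R)}$ of \eqref{unitnorm}, with $\|f_{(w,R)}\|_{(m,p)}^p\simeq\tau_m^2(w)$ and $|f_{(w,R)}(z)|e^{-|z|^m}\simeq 1$ on $D(w,R\tau_m(w))$, yield $\|Df^*_{(w,R)}\|_{(m,q)}\gtrsim m(1+|w|)^{m-1}\tau_m(w)^{\frac2q-\frac2p}$, which at the critical $m$ stays bounded below by a positive constant comparable to the asserted quantity; your monomials only work in the $p=q$ situations ($m=1$), e.g.\ the second line of \eqref{estimate}. The weak-null/compact-operator issue you flag for $p=\infty$ is real, but it is shared by any choice of test family (the paper passes over it as well); the decisive missing ingredient is the localized test functions, not that functional-analytic technicality.
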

We observe that if $D$ fails to be compact, then from Theorem~\ref{thm2} and the  relations  in \eqref{boundednorm} and \eqref{boundedinfinitynorm}, its essential norm can be simply estimated as $  \|D\|_e \simeq  |m^{2+p}-m^{1+p}|^{\frac{1}{p}}\simeq  \|D\|.$
\section{Preliminaries}
In this section we collect some basic facts and preliminary results that will be used  in the sequel. One of the  important ingredients needed is   the Littlewood--Paley type estimate from  \cite{Olivia2},
\begin{align}
\label{paley}
\|f\|_{(m, p)}^p \simeq |f(0)|^p + \int_{\CC} \frac{|f'(z)|^p e^{-p|z|^m}}{ (1+ |z|)^{p(m-1)}} dA(z),
\end{align}  which holds  for  all functions $f\in \mathcal{F}_{(m,p)}$.  On the other hand, for $p= \infty$, from  a simple modification of the  arguments  used in the proof of  Lemma~2.1 of \cite{TM3}, it follows that  $f$ belongs to the spaces $\mathcal{F}_{(m, \infty)}$ if and only if
\begin{align*}
\sup_{z\in \CC} \frac{|f'(z)|e^{-|z|^m}}{(1+|z|)^{m-1}} <\infty,
\end{align*} and in this case we estimate the norm  by
\begin{align}
\label{norm}
\|f\|_{(m, \infty)} \simeq |f(0)| + \sup_{z\in \CC} \frac{|f'(z)|e^{-|z|^m}}{(1+|z|)^{m-1}}.
\end{align}
Several properties of  linear operators  can often be described by their action on some special elements in the spaces.  The reproducing kernels   do often used for such purpose  in many  functional spaces.  Since  an explicit expression for the kernels in our current setting is still unknown, we will  use  another   sequence of special test functions. Such a sequence was first constructed in \cite{Borch} and  has been since then  used    by several authors  for example \cite{Olivia,TM3,JPP}.  We introduce the sequence as  follows.  We may first set
\begin{align*}\tau_m(z)=
\begin{cases}   1,  & 0\leq|(m^2-m)z|<1\\
\frac{|z|^{\frac{2-m}{2}}}{ |m^2-m|^{\frac{1}{2}}},\ \  & |(m^2-m)z|\geq 1.
 \end{cases}
 \end{align*}
 Then, for a sufficiently large positive number $R$, there exists a number $\eta(R)$ such that for any  $w\in \CC$ with $|w|> \eta(R)$, there exists an entire function $f_{(w, R)}$ such that
    \begin{align}
  \vspace{-0.3in}
  \label{test000}
      |f_{(w,R)}(z)| e^{-|z|^m}\leq C \min\Bigg\{ 1,\bigg(\frac{\min\{\tau_m(w), \tau_m(z)\}}{|z-w|}\bigg)^{\frac{R^2}{2}}\Bigg\}  \ \ \ \ \ \ \end{align}for all  $ z\in \CC$ and  for some constant $C$ that depends on $|z|^m$ and $R$. In particular when $z \in D(w, R\tau_m(w))$, the estimate becomes
   \begin{align}
  \label{test0}
   |f_{(w,R)}(z)| e^{-|z|^m}\simeq 1,
    \end{align} where $D(w,r)$ denotes  the Euclidean disk centered at $w$ and radius $r>0$.
 In addition, $f_{(w, R)}$ belongs to $\mathcal{F}_{(m,p)}$  with norm  estimated by
\begin{align}
\label{test}
\| f_{(w,R)}\|_{(m, p)}^p \simeq \tau_m^2(w),\ \ \ \  \eta(R) \leq |w|
\end{align} for all  $p$ in the range $0<p<\infty$. On the other hand, when $p= \infty$, from \eqref{test000} and \eqref{test0}, we easily deduce that
\begin{align}
\label{test00}
\| f_{(w,R)}\|_{(m, \infty)}\simeq 1.
\end{align}
Another important fact is the pointwise estimate for subharmonic functions $f$, namely that
\begin{align}
\label{pointwise}
|f(z)|^p e^{-p |z|^m} \lesssim \frac{1}{\sigma^2\tau_m^2(z)} \int_{D(z, \sigma \tau_m(z))} |f(w)|^pe^{-p|w|^m} dA(w)
\end{align} for all finite exponent $p$  and  a small positive number $\sigma$.  The estimate follows from
Lemma~2  of \cite{JPP}.

  Next, we   recall   the following useful covering lemma which is essentially from \cite{Olivia,OVL}.
\begin{lemma}\label{lem4}
 Let $\tau_m$ be as above.  Then, there exists a positive $\sigma >0$ and a sequence of points $z_j$ in $\CC$ satisfying the following conditions.
 \vspace{-0.2in}
 \begin{enumerate}
 \begin{multicols}{2}
 \item $z_j\not\in D(z_k,\sigma \tau_m(z_k)), \ \ j \neq k$;
 \item $\CC= \bigcup_jD(z_j, \sigma \tau_m(z_j))$;
  \end{multicols}
 \vspace{-0.2in}
 \item $\bigcup_{z\in  D(z_j, \sigma \tau_m(z_j))}D(z, \sigma \tau_m(z)) \subset D(z_j, 3\sigma \tau_m(z_j))$;
 \item The sequence $ D(z_j, 3\sigma \tau_m(z_j))$ is a covering of $\CC$ with finite multiplicity $N_{\max}$.
 \end{enumerate}
\end{lemma}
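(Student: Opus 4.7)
The plan is to treat this as a standard Vitali/Whitney-type covering argument adapted to the slowly varying ``local unit'' $\tau_m(z)$. The key analytic input is that $\tau_m$ is essentially Lipschitz with a controllably small constant, so that $\tau_m(z) \simeq \tau_m(w)$ whenever $|z-w| \leq c\, \tau_m(w)$ for a small absolute $c>0$. I would first establish this quantitative slow-variation property for the explicit $\tau_m$ given above by splitting into the regime $|(m^2-m)z|<1$, where $\tau_m\equiv 1$, and the regime $|(m^2-m)z|\geq 1$, where $\tau_m(z)=|z|^{(2-m)/2}/|m^2-m|^{1/2}$. In the latter regime an elementary derivative computation yields $|\tau_m(z)-\tau_m(w)|\leq C|z-w|/|w|^{m/2}$, which after dividing by $\tau_m(w)$ gives $\tau_m(z)/\tau_m(w)\to 1$ as $|z-w|/\tau_m(w)\to 0$; the cross-over between the two regimes is handled by a direct comparison. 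This allows us to fix a small $\sigma>0$ in advance so that $|z-w|\leq 3\sigma\,\tau_m(w)$ forces $\tfrac12 \tau_m(w)\leq\tau_m(z)\leq 2\tau_m(w)$.

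Next, I would construct the sequence $\{z_j\}$ via Zorn's lemma (equivalently, a transfinite greedy selection), taking a maximal subset $E\subset\CC$ subject to the separation condition (i), i.e.\ $|z_j-z_k|\geq \sigma\tau_m(z_k)$ for distinct $z_j,z_k\in E$. Condition (ii) then follows from maximality: if some $z\in\CC$ lay outside every disk $D(z_j,\sigma\tau_m(z_j))$, adjoining $z$ to $E$ would violate the asserted separation, contradicting maximality. The asymmetry of (i) (the radius is measured at $z_k$ rather than $z_j$) is absorbed by the slow-variation estimate of the previous paragraph up to shrinking $\sigma$ once more if necessary.

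Property (iii) follows from the triangle inequality together with slow variation: for $z\in D(z_j,\sigma\tau_m(z_j))$ and $w\in D(z,\sigma\tau_m(z))$ one has $\tau_m(z)\leq 2\tau_m(z_j)$ by the choice of $\sigma$, hence
\[
|w-z_j|\leq |w-z|+|z-z_j|\leq \sigma\tau_m(z)+\sigma\tau_m(z_j)\leq 3\sigma\tau_m(z_j).
\]
For the finite-multiplicity condition (iv), suppose a point $w\in\CC$ belongs to $N$ of the enlarged disks $D(z_{j_k},3\sigma\tau_m(z_{j_k}))$, $k=1,\dots,N$. Slow variation forces $\tau_m(z_{j_k})\simeq\tau_m(w)$ uniformly in $k$, so every such $z_{j_k}$ lies in one larger disk $D(w,C\sigma\tau_m(w))$. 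On the other hand the separation property (i), combined again with slow variation, implies that the smaller disks $D(z_{j_k},\tfrac{\sigma}{2}\tau_m(z_{j_k}))$ are pairwise disjoint and each has area comparable to $\sigma^2\tau_m^2(w)$. A direct area comparison inside $D(w,C\sigma\tau_m(w))$ then bounds $N$ by an absolute constant $N_{\max}$ independent of $w$.

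The main obstacle is bookkeeping rather than conceptual: one must choose $\sigma$ small enough, once and for all, so that the Lipschitz-type estimate on $\tau_m$ propagates through all four conclusions with precisely the stated multiplicative factors ($\sigma$, $\sigma$, $3\sigma$, and the implicit $C\sigma$ in the multiplicity bound). Once the quantitative slow-variation of $\tau_m$ is in hand, each of (i)--(iv) reduces to a transparent triangle-inequality or area argument, as in the references \cite{Olivia,OVL} from which the lemma is adapted.
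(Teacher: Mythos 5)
The paper does not actually prove this lemma: it is explicitly \emph{recalled} from \cite{Olivia,OVL}, so there is no internal argument to compare yours against. What you give is the standard maximal-net construction used in those references (and in Oleinik's original paper), and it is essentially sound: quantitative slow variation of $\tau_m$, a maximal separated set via Zorn's lemma, a triangle-inequality check for (iii), and a disjointness-plus-area packing count for (iv). Two bookkeeping points deserve more care than your sketch gives them. First, in deducing (ii) from maximality: because the separation in (i) is asymmetric (the radius is measured at $z_k$), the failure of $E\cup\{z\}$ to be admissible only tells you that for some $j$ either $|z-z_j|<\sigma\tau_m(z_j)$ (in which case $z$ is covered) or $|z-z_j|<\sigma\tau_m(z)$; in the second case slow variation yields only $z\in D\bigl(z_j,(1+\epsilon(\sigma))\,\sigma\tau_m(z_j)\bigr)$, so the covering radius you obtain is strictly larger than the separation radius, whereas the lemma as stated uses the same $\sigma$ in (i) and (ii). This is harmless for every use made of the lemma in the paper (only comparability of radii ever matters), but you should either enlarge the covering radius or rerun the selection with a smaller separation parameter and say so. Second, $\tau_m$ as defined is discontinuous across the circle $|(m^2-m)z|=1$ (the two branches do not agree there), so the clean two-sided bound $\tfrac12\tau_m(w)\le\tau_m(z)\le 2\tau_m(w)$ cannot be forced with the factor $2$ merely by shrinking $\sigma$; one only gets $\tau_m(z)\simeq\tau_m(w)$ with a constant depending on $m$. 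Again this perturbs only the numerical constants ($3\sigma$ in (iii) and the value of $N_{\max}$), not the structure of the argument, but the claim as you phrased it is not literally true.
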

 The composition operator $C_\Phi f= f(\Phi)$ is one of the classical and  well studied objects in  function related operator theories. Our next result on $C_\Phi$ describes its ompactness property while acting on the spaces $\mathcal{F}_{(m,p)}$. The result  will not only play a vital role to prove our second main result in the previous section but also is interest of its own.
 \begin{proposition}\label{compact} Let $0<p\leq q \leq  \infty$ and $\Phi $ be a nonconstant entire function on $\CC$. Then  the composition operator
$C_\Phi:  \mathcal{F}_{(m,p)} \to  \mathcal{F}_{(m,q)}$  is compact if and only if   $\Phi(z)= az+b$ for some complex numbers $a$ and $b$ such that $|a|<1.$
    \end{proposition}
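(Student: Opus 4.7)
The plan is to prove both directions separately, with the necessity split into a structural stage (forcing $\Phi$ to be affine) and a quantitative stage (forcing $|a|<1$).

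For the sufficiency, suppose $\Phi(z)=az+b$ with $|a|<1$. I use the standard criterion that $C_\Phi$ is compact iff every bounded sequence $(f_n)\subset \mathcal{F}_{(m,p)}$ tending to zero uniformly on compact subsets of $\CC$ satisfies $\|C_\Phi f_n\|_{(m,q)}\to 0$. The subharmonic pointwise estimate \eqref{pointwise} (or \eqref{norm} in the case $p=\infty$) gives
\[
|f_n(\Phi(z))|^q e^{-q|z|^m}\;\lesssim\; \|f_n\|_{(m,p)}^q\,\tau_m(\Phi(z))^{-2q/p}\,e^{q(|\Phi(z)|^m-|z|^m)}.
\]
Since $|a|^m<1$, a direct expansion yields $|\Phi(z)|^m-|z|^m\leq -(1-|a|^m)|z|^m+O(|z|^{m-1})$, so the right-hand side admits an integrable $n$-uniform majorant. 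Splitting $\CC$ into a large disk $D(0,R)$, where $f_n\circ\Phi\to 0$ uniformly by Montel, and its exterior, where the tail integral is uniformly small, one concludes by dominated convergence.

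For the necessity, assume $C_\Phi$ is compact, hence bounded. \emph{Stage 1} (affine): if $\Phi$ is transcendental or polynomial of degree $n\ge 2$, then $|\Phi(z)|\gtrsim |z|^n$ along some ray, so $|\Phi(z)|^m-|z|^m\to+\infty$. Testing against $f_{(w,R)}$ with $w=\Phi(z_0)$ and $|z_0|$ large, the concentration relations \eqref{test0}--\eqref{test} together with the Jacobian $|\Phi'(z_0)|^{-2}\sim|z_0|^{-2(n-1)}$ force the ratio $\|C_\Phi f_{(w,R)}\|_{(m,q)}/\|f_{(w,R)}\|_{(m,p)}$ to blow up, contradicting boundedness. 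Hence $\Phi(z)=az+b$. \emph{Stage 2} ($|a|<1$): if $|a|>1$, the change of variable $u=\Phi(z)$ produces the weight $e^{-q|(u-b)/a|^m}$, which differs from $e^{-q|u|^m}$ by the exponentially growing factor $e^{q(1-1/|a|^m)|u|^m+O(|u|^{m-1})}$; evaluated on the support of $f_{(w_n,R)}$ with $|w_n|\to\infty$ this contradicts boundedness. In the borderline $|a|=1$ (where $C_\Phi$ may still be bounded), take $w_n=\Phi(z_n)$ with $|z_n|\to\infty$ and consider $g_{w_n}=f_{(w_n,R)}/\|f_{(w_n,R)}\|_{(m,p)}$, which is bounded in $\mathcal{F}_{(m,p)}$ and tends to zero on compacta by \eqref{test000}. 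Since $|a|=1$ forces $|\Phi(z)|^m-|z|^m$ to stay controlled on the concentration disk $D(z_n,R\tau_m(z_n))$, combining \eqref{test0} and \eqref{test} gives $\|C_\Phi g_{w_n}\|_{(m,q)}\gtrsim 1$, contradicting compactness. This can be reinforced spectrally: conjugating by the translation to the fixed point of $\Phi$ (when $a\ne 1$) reduces $C_\Phi$ to $C_{az}$, whose eigenvectors $z^k$ carry unit-modulus eigenvalues $a^k$, incompatible with compactness.

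The main obstacle is the borderline $|a|=1$ in Stage 2, which must be argued uniformly across the full range $0<p\leq q\leq\infty$. The standard normalized test sequence behaves differently on each side of the inequality $p<q$, and the subtle point is ensuring that the geometric cancellation $|\Phi(z)|^m-|z|^m=O(1)$ on the concentration disk is strong enough to produce a uniform lower bound for $\|C_\Phi g_{w_n}\|_{(m,q)}$; this is where careful use of \eqref{test0}, \eqref{test}, \eqref{test00}, and the covering Lemma~\ref{lem4} is essential, with the spectral/eigenvalue reformulation serving as a robust backup when the direct test-function estimate is tight.
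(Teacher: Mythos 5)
Your sufficiency argument and your Stage~1 run along essentially the same lines as the paper's proof: the normalized test functions \eqref{unitnorm}, the concentration estimates \eqref{test0}--\eqref{test00}, the pointwise bound \eqref{pointwise}, and a splitting of $\CC$ into $\{|az+b|\le r\}$ and its complement; those parts are fine in outline. The genuine problem is the one you yourself flag and then leave open: the borderline case $|a|=1$ in Stage~2, which is the heart of the necessity. Note first that your ``spectral backup'' is only meaningful when $p=q$, since for $p<q$ the operator acts between two different spaces and has no eigenvalues to speak of; moreover, even for $p=q$ the reduction to $C_{az}$ by conjugating with a translation presupposes that translations act boundedly on $\mathcal{F}_{(m,p)}$, which is not automatic when $m>1$. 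When $p=q$ the case can indeed be closed, either spectrally for rotations or directly, because the normalization mismatch disappears: choosing $w_n\to\infty$ in a direction where $|aw_n+b|\ge |w_n|$, the estimates \eqref{test0} and \eqref{test} give $\|C_\Phi g_{w_n}\|_{(m,p)}\gtrsim 1$, contradicting compactness (this is essentially the paper's limit argument \eqref{one}, \eqref{oneand}, \eqref{twoo}).

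For $p<q$, however, the gap cannot be repaired by sharpening the test-function estimate, because the lower bound you hope for is false. Already for $\Phi(z)=z$ (or any rotation, $|a|=1$, $b=0$), rotation invariance and \eqref{test} give $\|C_\Phi g_{w_n}\|_{(m,q)}=\|g_{w_n}\|_{(m,q)}\simeq\tau_m(w_n)^{\frac{2}{q}-\frac{2}{p}}\to 0$ whenever $p<q$ and $m<2$ (the range relevant here, where $\tau_m\to\infty$), no matter how $w_n$ is chosen; the geometric cancellation $|\Phi(z)|^m-|z|^m=O(|z|^{m-1})$ cannot offset the factor $\tau_m(w_n)^{2/q-2/p}$. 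Worse, \eqref{pointwise} yields $|f(z)|e^{-|z|^m}\lesssim \tau_m(z)^{-2/p}\|f\|_{(m,p)}$, so for any bounded sequence $f_n\to 0$ uniformly on compacta one gets $\int_{|z|>r}|f_n|^qe^{-q|z|^m}dA\lesssim \big(\sup_{|z|>r}\tau_m(z)^{-2/p}\big)^{q-p}\to 0$ uniformly in $n$ (similarly for $q=\infty$): that is, the inclusion $a=1$, $b=0$ is itself compact when $p<q$ and $m<2$, so no test-function (or any other) argument can force $|a|<1$ in that range. This is also visible in the paper's own computation for $p\le q<\infty$: for a rotation the exponent $q(|\Phi(w)|^m-|w|^m)+\frac{2(p-q)}{p}\log\tau_m(\Phi(w))$ tends to $-\infty$ automatically, so the compactness hypothesis yields no contradiction at $|a|=1$. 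In short, your proposal is essentially complete for $p=q$ once Stage~2 is finished as above, but in the range $p<q$ the obstruction you identified is real and not merely technical, and the ``only if'' direction cannot be obtained along the route you propose.
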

    \begin{proof}
     We begin with the proof of the necessity and  assume that $C_\Phi$ is compact. We also  bserve that the  normalized  sequence
          \begin{align}
\label{unitnorm}
f^*_{(w,R)}= \frac{f_{(w,R)}} {\|f_{(w,R)}\|_{(m,p)}}\simeq
\begin{cases}
 \frac{f_{(w,R)}}{\tau_m(w)^{\frac{2}{p}}},  & 1\leq p <\infty \\
  f_{(w,R)}, \ & p= \infty,
 \end{cases}
 \end{align} as described  from \eqref{test000}-\eqref{test00},
     converges to zero as $|w| \to \infty,$ and the convergence is uniform on compact subset of $\CC.$  Now if  $0<p<q= \infty,$  then  our assumption and   $C_\Phi$ applied to the sequence  $f^*_{(w, R)}$ imply
\begin{align*}
0= \lim_{|w| \to  \infty }\|C_\Phi f^*_{(w,R)}\|_{(m,\infty)} \simeq  \lim_{|w| \to  \infty }\sup_{z\in \CC}\frac{ |f_{(w, R)}(\Phi(z))|}{\tau_m(w)^{\frac{2}{p}}e^{|z|^m}}
 \geq  \lim_{|w| \to  \infty }\frac{ \big|f_{(w, R)}(\Phi(z))\big|}{\tau_m(w)^{\frac{2}{p}}e^{|z|^m}}
\end{align*}
for all $z, w\in \CC$. In particular,  setting $w= \Phi(z)$ and applying \eqref{test0} give
\begin{align}
\label{one}
0= \lim_{|\Phi(z)| \to  \infty }\big|f_{(\Phi(z),R)}(\Phi(z))\big|e^{-|\Phi(z)|^m} \frac{e^{|\Phi(z)|^m-|z|^m}}{\tau^{\frac{2}{p}}_m(\Phi(z))}
\simeq \lim_{|\Phi(z)| \to  \infty } \frac{e^{|\Phi(z)|^m-|z|^m}}{\tau^{\frac{2}{p}}_m(\Phi(z))}\nonumber\\
\simeq \lim_{|\Phi(z)| \to  \infty } e^{|\Phi(z)|^m-|z|^m-\frac{2}{p}\log\big(\tau_m(|\Phi(z)) \big)}
\end{align} from which we may  first  claim that $\Phi(z)= az+b$ for some complex numbers $a$ and $b$. If not,  there exists a sequence $z_k$ such that $|z_k| \to \infty $ and $\big|\frac{\Phi(z_k)}{z_k}\big|^m \to \infty$ as $k\to \infty$. It follows from this that  there exists an $N_1$ such that\begin{align*}
\Big|\frac{\Phi(z_k)}{z_k}\Big|^m-1-\frac{2}{p|z_k|^m}\log\big(\tau_m(|\Phi(z_k))\big)  \geq 1\end{align*} for all $k\geq N_1$. To this end, we have
\begin{align*}
%\label{onee}
e^{|\Phi(z_k)|^m-|z|^m-\frac{2}{p}\log\big(\tau_m(\Phi(z_k))\big )}= \sum_{n=0}^\infty \frac{1}{n\,!} \Big( |\Phi(z_k)|^m-|z_k|^m -\frac{2}{p}\log\big(\tau_m(\Phi(z_k)) \big)\Big)^n\nonumber\\
= \sum_{n=0}^\infty \frac{|z_k|^{nm}}{n\,!} \bigg( \frac{|\Phi(z_k)|^m}{|z_k|^m}-1-\frac{2}{p|z_k|^m}\log\big(\tau_m(\Phi(z_k)) \big)\bigg)^n \quad \quad \quad \quad \quad\nonumber\\
\geq \frac{|z_k|^{nm}}{n\,!} \bigg ( \frac{|\Phi(z_k)|^m}{|z_k|^m}-1-\frac{2}{p|z_k|^m}\log\big(\tau_m(\Phi(z_k))\big)\bigg)^n , \ \ k\geq N_1 \quad \quad \quad \quad \quad
\end{align*} from which we deduce
\begin{align}
\label{onee}
e^{|\Phi(z-k)|^m-|z|^m-\frac{2}{p}\log\big(\tau_m(\Phi(z))\big )} \gtrsim \frac{|z_k|^{nm}}{n\,!} \to \infty  \ \text{as}\   |z_k| \to \infty
\end{align} that  contradicts \eqref{one} and hence $\Phi(z)= az+b$.  We further claim that $|a|<1.$ If not, observe that the estimate in \eqref{one}  does in addition  imply
\begin{align}
\label{twoo}
\lim_{|\Phi(z)| \to  \infty } |\Phi(z)|^m-|z|^m-\frac{2}{p}\log\big(\tau_m(\Phi(z))\big)\quad \quad \quad \quad \quad  \quad \quad \quad \quad \quad\nonumber\\
=  \lim_{|az+b| \to  \infty } |az+b|^m-|z|^m-\frac{2}{p}\log\big(\tau_m(|az+b|)\big)<0
\end{align}  which holds only if $|a|<1$.

If  $p= q= \infty, $ then following the same arguments as those leading to  \eqref{one}, we  obtain
\begin{align}
\label{oneand}
0= \lim_{|\Phi(z)| \to  \infty }\big|f_{(\Phi(z),R)}(\Phi(z))\big|e^{-|\Phi(z)|^m}  e^{|\Phi(z)|^m-|z|^m}
\simeq \lim_{|\Phi(z)| \to  \infty } e^{|\Phi(z)|^m-|z|^m}
\end{align}  from which and  repeating the arguments leading to  \eqref{onee} and \eqref{twoo}, we easily arrive at the desired conclusion.

In a similar manner, when  $0<p\leq q<\infty$, then  applying \eqref{pointwise} we estimate
\begin{align*}
0=  \lim_{|w| \to \infty} \| C_\Phi f^*_{(w, R)}\|_{(m, q)} \geq \lim_{|w| \to \infty} \frac{1}{\tau_m(w)^{\frac{2q}{p}}} \int_{D(w,\sigma\tau_m(w))}\frac{ |f_{(w, R)}(\Phi(z))|^q}{ e^{q|z|^m}} dA(z) \quad \quad \quad \nonumber\\
=  \lim_{|w| \to \infty}  \frac{1}{\tau_m(w)^{\frac{2q}{p}}}  \int_{D(w,\sigma\tau_m(w))}\frac{ |f_{(w, R)}(z)|^q e^{-q|z|^m}}{ e^{q|\Phi^{-1}(z)|^m-q|z|^m}} dA(\Phi^{-1}(z)) \quad  \nonumber\\
\gtrsim \lim_{|w| \to \infty} \frac{1}{\tau_m(w)^{\frac{2q}{p}}} \tau_m(w)^2  e^{-q|\Phi^{-1}(w)|^m+q|w|^m} \quad  \quad \quad \quad \quad \quad \nonumber\\
=\lim_{|\Phi(w)| \to \infty} \tau_m(\Phi(w))^{2-\frac{2q}{p}}  e^{q|\Phi(w)|^m- q|w|^m}\quad  \quad \quad \quad \quad \quad\nonumber\\
\simeq \lim_{|\Phi(w)| \to \infty} e^{q|\Phi(w)|^m- q|w|^m + \frac{2(p-q)}{p} \log \big(\tau_m(\Phi(w))\big)}\quad  \quad \quad \quad \quad \quad
\end{align*} from which and repeating   the arguments leading to  \eqref{onee} and \eqref{twoo} again, we arrive at our assertion.

To prove the sufficiency of the condition, we let  $f_n$   be a uniformly bounded sequence of functions in $\mathcal{F}_{(m,p)}$ that converge uniformly to zero on   compact subsets of  $\CC$. Then we consider three different cases.

  \emph{Case 1}:  if $q= \infty$ and $0<p<\infty$, then  for a  positive number $r$ and eventually applying \eqref{pointwise}, we estimate
 \begin{align*}
 %\label{new}
 \|C_\Phi f_n\|_{(m,\infty)} = \sup_{z\in\CC} |f_n(\Phi(z))|e^{-|z|^m}=  \sup_{z\in\CC} |f_n(az+b)| e^{-|z|^m}\quad \quad \quad \quad  \quad \quad \quad \quad \nonumber\\
\lesssim \sup_{|az+b|>r} |f_n(az+b)|e^{-|az+b|^m} \frac{e^{|az+b|^m}}{e^{|z|^m}}+\sup_{|az+b|\leq r} |f_n(az+b)| e^{-|z|^m} \nonumber\\
 \lesssim  \|f_n\|_{(m,p)} \sup_{|az+b|>r}\frac{e^{|az+b|^m-|z|^m}}{\tau_m(az+b)^{\frac{2}{p}}}+ \sup_{|az+b|\leq r} |f_n(az+b)| \quad \nonumber\\
 \simeq \|f_n\|_{(m,p)} \sup_{|az+b|>r}\frac{e^{|az+b|^m-|z|^m}}{\tau_m(az+b)^{\frac{2}{p}}}
 + \sup_{|az+b|\leq r} |f_n(az+b)|,
    \end{align*} where in the last inequality we  used the pointwise estimate \eqref{pointwise}.
   Since $\|f_n\|_{(m,\infty)}$ is  uniformly bounded and $|a|<1$,  the first summand  above  goes to zero as $r \to \infty$ and   the second goes to zero when $n\to \infty$. This implies
$ \|C_\Phi f_n\|_{\mathcal{F}_{(m,\infty)}} \to 0$ as $n\to \infty $ from which our assertion follows in this case.

\emph{Case 2}:  if $q= \infty=p$, then  for a  positive number $r$, we  also have
\begin{align*}
 \|C_\Phi f_n\|_{(m,\infty)} = \sup_{z\in\CC} |f_n(\Phi(z))|e^{-|z|^m}=  \sup_{z\in\CC} |f_n(az+b)| e^{-|z|^m}\quad \quad \quad \quad \quad \quad  \nonumber\\
 \simeq \sup_{|az+b|>r} |f_n(az+b)|e^{-|az+b|^m} \frac{e^{|az+b|^m}}{e^{|z|^m}}+\sup_{|az+b|\leq r} |f_n(az+b)| e^{-|z|^m}\nonumber\\
 \lesssim  \|f_n\|_{(m,\infty)} \sup_{|az+b|>r}\frac{e^{|az+b|^m}}{e^{|z|^m}}+ \sup_{|az+b|\leq r} |f_n(az+b)|
     \end{align*} from which the claim follows.

     \emph{Case 3}: if $0<p\leq q<\infty$, then  applying \eqref{pointwise}
     \begin{align*}
 \|C_\Phi f_n\|_{(m,q)}^q =   \int_{\CC} |f_n(az+b)|^q e^{-q|az+b|^m} \Big( e^{q|az+b|^m-q|z|^m}\Big) dA(z) \quad \quad \quad \quad \quad \quad \quad \quad \nonumber\\
  \lesssim  \int_{\CC} \bigg(\frac{1}{\tau_m(az+b)^2} \int_{D(az+b, \sigma \tau_m(az+b))} \frac{ |f_n(w)|^p }{ e^{p|w|^m}}dA(w)\bigg)^{\frac{q}{p}} \frac{e^{q|az+b|^m}}{e^{q|z|^m}} dA(z).
  \end{align*}
  Now if $|az+b|>r$ for some positive number $r$, then the part of the  integral on $ \{z\in \CC: |az+b|>r\} $ is bounded by
  \begin{align*}
  \| f_n\|_{(m,p)}^q \int_{|az+b|>r} \frac{e^{q|az+b|^m-q|z|^m}}{\tau_m(az+b)^{\frac{2q}{p}}}   dA(z)
  \lesssim \int_{|az+b|>r} \frac{e^{q|az+b|^m-q|z|^m}}{\tau_m(az+b)^{\frac{2q}{p}}}  dA(z)
  \end{align*} which is finite as $|a|<1$ and tends to zero as $r\to \infty$.
  On the other hand, if  $|az+b|\leq r$, then  using the fact that $|a|<1$ we find that  the remaining   part of the integral  is bounded by
  \begin{align*}
 \sup_{|az+b|\leq r} |f_n(az+b)|^q \int_{|az+b|\leq r} e^{q|az+b|^m-q|z|^m} dA(z) \lesssim  \sup_{|az+b|\leq r} |f_n(az+b)|^q \to 0\
     \end{align*} as $ n\to \infty$  and completes the proof of the proposition.
    \end{proof}
\section{Proof of the Main results}
\subsection{Proof of Theorem~\ref{thm1}}
In this section we prove our first main results.\\
\textbf{Part i)}: Let  $0<p<\infty$ and $D:\mathcal{F}_{(m,p)} \to \mathcal{F}_{(m,\infty)}$ is bounded. Then, applying the sequence of function in \eqref{unitnorm}   we estimate
\begin{align*}
 \|D\| \gtrsim  \|Df^*_{(w, R)}\|_{(m,\infty)}\simeq \frac{ \sup_{z\in \CC}|f'_{(w, R)}(z)| e^{-|z|^m} }{\tau_m^{\frac{2}{p}}(w)}
\geq   \frac{|f'_{(w, R)}(w)| e^{-|w|^m}}{\tau_m^{\frac{2}{p}}(w)} \simeq \frac{m |w|^{m-1} }{\tau_m^{\frac{2}{p}}(w)}
\end{align*} for all $w\in \CC$. This  happens to hold  only if
\begin{align}
\label{infitynorm}
\|D\| \gtrsim \sup_{w\in \CC} \frac{m |w|^{m-1} }{\tau_m^{\frac{2}{p}}(w)}\simeq \begin{cases}
 \frac{\sup_{w\in \CC} (1+|w|)^{\frac{m(p+1) -(p+2)}{p}}}{m^{-1}|m^2-m|^{-\frac{1}{p}}}, & m\neq1\\
1, & m=1
\end{cases}
\end{align} from which our assertion and one side of the norm  estimate for $D$ follow.

Conversely, applying \eqref{paley} and \eqref{pointwise},  we  also have
 \begin{align}
  \|Df\|_{(m,\infty)} = \sup_{z\in \CC}\frac{ |f'(z)|}{e^{|z|^m}} \lesssim \sup_{z\in \CC}\Bigg( \frac{1}{\tau_m^2(z)} \int_{D(z, \sigma\tau_m(z))} \frac{|f'(w)|^p}{e^{p|w|^m}} dA(w)\Bigg)^{\frac{1}{p}} dA(z) \nonumber
 \end{align}
 Now for each point $z\in D(w, \sigma\tau_m(w))$, observe  that  $1+|z| \simeq 1+|w|$. Taking this into account,  we further estimate the above by
 \begin{align*}
 \sup_{z\in \CC}\bigg( \frac{m^p(1+|z|)^{p(m-1)}}{\tau_m^2(z)} \int_{D(z, \sigma\tau_m(z))} \frac{|f'(w)|^pe^{-p|w|^m}}{m^p(1+|w|)^{p(m-1)}} dA(w)\bigg)^{\frac{1}{p}} dA(z)\nonumber\\
\lesssim \begin{cases}
 \|f\|_{(m,p)}\frac{\sup_{w\in \CC} (1+|w|)^{\frac{m(p+1) -(p+2)}{p}}}{|m^{2+p}-m^{1+p}|^{-\frac{1}{p}}}, & m\neq1\\
\|f\|_{(m,p)}, & m=1
\end{cases}
  \end{align*} from which the sufficiency of the condition and
the reverse side of the estimate in \eqref{infitynorm} follow.

We now turn to the proof of the compactness part and  first assume that   $m<2-\frac{p}{p+1}$.  Then for  each positive $\epsilon$,  there exists $N_1$ such that
 \begin{align}
 \sup_{|w|> N_1}  |m^{2+p}-m^{1+p}|^{\frac{1}{p}}
 (1+|w|)^{\frac{m(p+1) -(p+2)}{p}} \simeq  \sup_{|w|> N_1}   \frac{m^p(1+ |w|)^{p(m-1)}}{\tau_m^{2}(w)}<\epsilon.
 \label{partly}
 \end{align}
 Next, we let   $f_n$ to  be a uniformly bounded sequence of functions in $\mathcal{F}_{(m,p)}$ that converges uniformly to zero on   compact subsets of  $\CC$.  Then applying   \eqref{paley} and  arguing in the same way as in the  series of estimations made  above, and invoking eventually \eqref{partly} it follows that
 \begin{align*}
  \frac{ |f_n'(z)|^p}{ e^{p|z|^m}}\lesssim \frac{1}{\tau_m^2(z)} \int_{D(z, \sigma\tau_m(z))} \frac{|f_n'(w)|^p}{e^{p|w|^m}} dA(w)
     =\frac{1}{\tau_m^{2}(z)}\int_{\substack{w\in  D(z, \sigma\tau_m(z))\\ |w|\leq N_1}}\frac{|f_n'(w)|^p}{ e^{p|w|^m}} dA(w) \\
     +  \int_{\substack{w\in D(z, \sigma\tau_m(z))\\|w|> N_1}} \frac{|f_n'(w)|^p e^{-p|w|^m}}{\tau_m^{2}(w)} dA(w)\quad \quad \quad \quad \quad \quad \quad \quad \quad \quad \quad \quad \quad  \nonumber\\
   \lesssim  \sup_{|w|\leq N_1} |f_n(w)|^p   + \|f_n\|_{(m,p)}^p  \sup_{|w|>N_1} \frac{m^p(1+ |w|)^{p(m-1)}}{\tau_m^{2}(w)} \quad \quad \quad \nonumber\\
   \lesssim  \sup_{|w|\leq N_1} |f_n(w)|^p+ \sup_{|w|>N_1} (1+|w|)^{m(p+1) -(p+2)}\quad \quad \nonumber\\
   \lesssim  \sup_{|w|>N_1} \frac{m^p(1+ |w|)^{p(m-1)}}{\tau_m^{2}(w)}  \lesssim  \epsilon^p \  \ \text{as}\ \ n\to \infty \quad  \quad
 \end{align*} and from which we have that
 \ \begin{align*}
    \|Df_n\|_{(m,\infty)}= \sup_{z\in\CC} |f_n'(z)| e^{-|z|^m} \lesssim \epsilon \ \ \text{as}\ n\to \infty.
 \end{align*}
     On the other hand, if   $D$ is compact, applying  the sequence of functions     $ f^*_{(w,R)}$ in \eqref{unitnorm}, \eqref{pointwise} and \eqref{test0}, we find
 \begin{align*}
 \frac{m^p(1+|w|)^{p(m-1)}}{\tau_m^2(w)} \simeq  m^p(1+|w|)^{p(m-1)}  e^{-q|w|^m} |f^*_{(w, \eta(R))}(w)|^p \quad \quad \quad \quad \quad \quad \quad \quad \\
 \lesssim  \int_{D(w, \sigma\tau_m(w))} m^p(1+|z|)^{p(m-1)} |f^*_{(w, \eta(R))}(z)|^pe^{-p|z|^m}dA(z) \quad \quad \quad \quad \quad \quad \quad  \nonumber\\
 \leq  \int_{D(w, \sigma\tau_m(w))} \bigg(\sup_{w\in \CC} m(1+|z|)^{(m-1)} |f^*_{(w, \eta(R))}(z)|e^{-|z|^m}\bigg)^p dA(z)
  \simeq   \|Df^*_{(w, \eta(R))}\|_{(m,\infty)}^p
 \end{align*} from which we have that
 \begin{align*}
|m^{2+p}-m^{1+p}|^{\frac{1}{p}}(1+|w|)^{m-1+ \frac{m-2}{p}} \simeq  \frac{(1+|w|)^{(m-1)}}{ \tau_m ^{\frac{2}{p}}(w) } \lesssim \|Df^*_{(w, \eta(R))}\|_{(m,\infty)} \to 0,
 \end{align*}as $ |w| \to \infty$  which holds only when  $m-1+ \frac{m-2}{p} <0$ as asserted.

     \textbf{Part ii)}: Since (b)  $\Rightarrow$ (a), we will verify that  (a)$\Rightarrow$ (c) and (c)$\Rightarrow$ (b). For the first we argue as follows.   Let $0<p<\infty$ and $R$  be a sufficiently large number   and $(z_k)$ be the covering sequence as in Lemma~\ref{lem4}. Then by  Lemma~2.4 of \cite{TM3}, the function
\begin{align*}
F= \sum_{z_k:|z_k|\geq\eta(R)} a_k f_{(z_k,R)} \in\mathcal{F}_{(m,\infty)} \ \text{and} \ \|F\|_{(m,\infty)} \lesssim \|(a_k)\|_{\ell^\infty}
\end{align*}  for every $\ell^\infty$  sequence $(a_k)$ .
If $(r_k(t))_k$  is the Radmecher sequence of function on $[0,1]$ chosen as in \cite{DL}, then the sequence $(a_kr_k(t))\in \ell^\infty$ with $\|(a_kr_k(t))\|_{\ell^\infty}= \|(a_k)\|_{\ell^\infty}$ for all $t$. This implies that the function
\begin{align*}
F_t= \sum_{z_k:|z_k|\geq\eta(R)} a_k r_k(t) f_{(z_k,R)}\in \mathcal{F}_{(m,\infty)} \ \text{and} \ \|F_t\|_{(m,\infty)} \lesssim \|(a_k)\|_{\ell^\infty}.
\end{align*} Then, an  application of Khinchine's inequality \cite{DL} yields
\begin{align}
\label{Khinchine1}
\Bigg(\sum_{z_k:|z_k|\geq\eta(R)} |a_k|^{2}|f'_{(z_k,R)}(z)|^2\Bigg)^{\frac{p}{2}}\lesssim \int_{0}^1\bigg| \sum_{z_k:|z_k|\geq\eta(R)} a_k r_k(t) f'_{(z_k,R)}(z)\bigg|^q dt.
\end{align}
Making use of \eqref{Khinchine1}, and subsequently Fubini's theorem, we have
 \begin{align}
\int_{\CC}\Bigg(\sum_{z_k:|z_k|\geq\eta(R)} |a_k|^{2} |f'_{(z_k,R)}(z)|^2\Bigg)^{\frac{p}{2}} e^{-p|z|^m}dA(z)\quad \quad \quad \quad \quad  \quad \quad \quad \quad \quad  \quad\nonumber\\
\lesssim \int_{\CC} \int_{0}^1\bigg| \sum_{z_k:|z_k|\geq\eta(R)} a_k r_k(t) f'_{(z_k,R)}(z)\bigg|^p dt \frac{dA(z)}{e^{p|z|^m}}\nonumber\\
=  \int_{0}^1 \int_{\CC}\bigg| \sum_{z_k:|z_k|\geq\eta(R)} a_k r_k(t) f'_{(z_k,R)}(z)\bigg|^p \frac{dA(z) dt}{e^{p|z|^m}}\simeq \int_{0}^1\|DF_t\|_{(m,p)}^pdt\lesssim \|(a_k)\|_{\ell^\infty}^p.
\label{series3}
\end{align}
Then, using  \eqref{test0} we get
\begin{align*}
\sum_{z_k:|z_k|\geq\eta(R)} |a_k|^{p}\int_{D(z_k, 3\sigma\tau_m(z_k))}  (1+|z|)^{-p(m-1)}dA(z) \quad \quad \quad \quad \quad \quad \quad \quad \quad \quad \quad \quad \quad \quad \quad \quad \quad \quad \quad \nonumber\\
\simeq  \sum_{z_k:|z_k|\geq\eta(R)} |a_k|^{p}\int_{D(z_k, 3\sigma\tau_m(z_k))} (1+|z|)^{p(m-1)} \frac{|f'_{(z_k, R)}(z) |^p e^{-p|z|^m}}{(1+|z|)^{p(m-1)}} dA(z) \quad \quad \quad \quad \quad \quad \quad \quad \quad \quad \quad \quad\nonumber\\
 \simeq \int_{\CC} \sum_{z_k:|z_k|\geq\eta(R)} |a_k|^{p}\chi_{D(z_k, 3\sigma\tau_m(z_k))}(z)| f'_{(z_k, R)}(z)|^p e^{-p|z|^m}dA(z)\quad \quad \quad \quad \quad \quad \quad \quad \quad \quad \quad\nonumber\\
\lesssim \max\{1, N_{\max}^{1-p/2}\} \int_{\CC}\Bigg(\sum_{z_k:|z_k|\geq\eta(R)} |a_k|^{2} |f_{(z_k,R)}'(z)|^2\Bigg)^{\frac{p}{2}}e^{-p|z|^m}dA(z)\lesssim \|(a_k)\|_{\ell^\infty}^p. \quad \quad \quad \quad \quad \quad \quad \quad\quad \quad \quad \quad
\end{align*}
Setting, in particular,   $a_k=1$  for all $k$ in the above series of estimates results in
\begin{align*}
%\label{OKK}
\sum_{z_k:|z_k|\geq\eta(R)}\int_{D(z_k, 3\sigma\tau_m(z_k))}  (1+|z|)^{-p(m-1)}dA(z)< \infty.
\end{align*}
Now we take  a positive number $r\geq \eta(R)$ such that whenever  $z_k$ of the covering sequence belongs to
$\{|z|<\eta(R)\}$, then $D(z_k, \sigma\tau_m(z_k)) $ belongs to $\{|z|<\eta(R)\}$. Thus,
\begin{align}
\label{againn}
\int_{|w|\geq r} m^p(1+|w|)^{p(m-1)} dA(w) \simeq \int_{|w|\geq r} \frac{1}{\tau_m^2(w)}\int_{D(w,3\sigma\tau_m(w))} \frac{ dA(z)dA(w)}{(1+|z|)^{-p(m-1)}} \quad \quad \quad \quad\quad \quad \nonumber\\
\leq \sum_{|z_k|\geq\eta(R)}\int_{D(z_k, \sigma\tau_m(z_k))} \frac{1}{\tau_m^2(w)}\int_{D(w, 3\sigma\tau_m(w))} \frac{dA(z) dA(w)}{m^{-p}(1+|z|)^{-p(m-1)}}\quad \quad \quad \quad \quad \quad \quad \quad  \nonumber\\
\lesssim \sum_{z_k:|z_k|\geq\eta(R)}\int_{D(z_k, 3\sigma\tau_m(z_k))} \frac{ dA(z)}{(1+|z|)^{-p(m-1)}} < \infty.\quad \quad \quad \quad \quad \quad \quad \quad \quad
\end{align}
It follows that
\begin{align*}
%\label{Okagain}
\int_{|w|< r} \frac{1}{\tau_m^2(w)}\int_{D(w, 3\sigma\tau_m(w))} m^p(1+|z|)^{p(m-1)} dA(z)dA(w)< \infty
\end{align*} from which and taking into account  \eqref{againn}, we obtain
\begin{align*}
\int_{\CC}  m^p(1+|z|)^{p(m-1)} dA(z)<\infty
\end{align*}
 which  holds only if  $p(m-1) < -2$  as asserted.

It remains to show that condition (c) implies (b).  To this end, let   $f_n$   be a uniformly bounded sequence of functions in $\mathcal{F}_{(m,\infty)}$ that converges uniformly to zero on   compact subsets of  $\CC$, and by the given condition, for each $\epsilon >0,$ there exists a positive number $r_1$ such that
 \begin{align*}
 %\label{less}
 \int_{|z|>r_1} (1+|z|)^{p(m-1)} dA(z) < \epsilon.
 \end{align*}
 It follows from this and  \eqref{norm} that
  \begin{align}
  \label{part}
\int_{|z|>r_1}|f_n'(z)|^p e^{-p|z|^m}dA(z)= \int_{|z|>r_1} \frac{|f_n'(z)|^pe^{-p|z|^m}}{(1+|z|)^{p(m-1)}} (1+|z|)^{p(m-1)}dA(z) \nonumber\\
  \lesssim \|f\|_{(m,\infty)}^p\int_{|z|>r_1} (1+|z|)^{p(m-1)}dA(z)
  \lesssim \|f\|_{(m,\infty)}^p  \epsilon \lesssim \epsilon.
 % \label{onepart}
  \end{align}
  On the other hand,  when $|z|\leq r_1$  we find
  \begin{align*}
  \int_{|z|\leq r_1} |f_n'(z)|^p e^{-p|z|^m}dA(z)\lesssim \int_{|z|\leq r_1} |f_n(z)|^p(1+|z|)^p  e^{-q|z|^m}dA(z) \quad \quad \nonumber\\
   \lesssim \sup_{|z|\leq r_1}|f_n(z)|^p  \int_{|z|\leq r_1} (1+|z|)^p  e^{-p|z|^m}dA(z)
   \lesssim \sup_{|z|\leq r_1}|f_n(z)|^p  \to 0 \ \
    \end{align*} as $ n\to \infty$ and,  from  which and \eqref{part} our claim $ \|Df_n\|_{(m,p)} \to 0$ as $n \to \infty$ follows.
\subsection{Proof of Theorem~\ref{thm2}}
In this section we prove our main results on essential norm.   Assume  that $1\leq p\leq \infty$ and $D:\mathcal{F}_{(m,p)} \to \mathcal{F}_{(m,q)}$ is bounded.  If $ q <\infty \ \text{and}\ m<2-\frac{pq}{pq+q-p} \ \text{or}\  q = \infty \ \text{and}\  m<2-\frac{p}{p+1}$, then as noticed before $D$ becomes a compact operator and its essential norm vanishes.  Thus, our aim here  is to  establish the result only for the two remaining cases namely   for
 $q <\infty \ \text{and}\ m= 2-\frac{pq}{pq+q-p}$ and  for $ q = \infty \ \text{and}\  m= 2-\frac{p}{p+1}$.
\subsection{Proof of the lower estimates in \eqref{estimate}}
To prove the lower bounds we will again use  the sequence of   functions in  \eqref{unitnorm}, and applying $D$ to such a sequence we find
\begin{align*}
%\label{common}
\|D\|_e \geq \limsup_{|w|\to \infty} \|D f^*_{(w,R)}\|_{(m,q)}.
\end{align*}
Now if   $p=q= \infty $, then  making use of  \eqref{test0} we obtain
 \begin{align*}
\|D\|_e \geq \limsup_{|w|\to \infty} \| D f^*_{(w,R)}\|_{(m,\infty)} \simeq \limsup_{|w|\to \infty} \ \sup_{z\in \CC}
|f'_{(w,R)}(z)|e^{-|z|^m}\quad \quad \quad \quad \quad \quad\nonumber\\
\geq \limsup_{|w|\to \infty} |f'_{(w,R)}(w)|e^{-|w|^m}  \geq \limsup_{|w|\to \infty} m (1+|z|)^{m-1}
 \simeq  1,
\end{align*}  where we set m= 1 and from  which  the assertion follows. If we instead consider  $1\leq p<q =\infty $, then   it follows from  \eqref{test0}  and eventually setting $ m=2-\frac{p}{p+1}$ that
\begin{align*}
\|D\|_e\geq \limsup_{|w|\to \infty} \| D f^*_{(w,R)}\|_{(m,\infty)} \geq  \limsup_{|w|\to \infty} \frac{ |f'_{(w,R)}(w)|e^{-|w|^m}}{ \tau_m^{\frac{2}{p}}(w)}
\simeq  \limsup_{|w|\to \infty} \frac{m(1+|w|)^{m-1} }{\tau_m^{\frac{2}{p}}(w)}\nonumber\\
\simeq m|m^2-m|^{\frac{1}{p}} \limsup_{|w|\to \infty} (1+|w|)^{m-1+ \frac{m-2}{p}}=\big |m^{2+p}-m^{1+p}\big|^{\frac{1}{p}}.
 \end{align*}
On the other hand, if  $1\leq p\leq q <\infty$, then  making use of \eqref{test}  we again estimate
\begin{align*}
\|D\|_e \geq \limsup_{|w|\to \infty} \big\| D f^*_{(w,R)}\big\|_{(m,q)}
\simeq \limsup_{|w| \to \infty} \frac{1}{\tau_m^{\frac{2}{p}}(w)} \bigg(\int_{\CC}
\frac{|f'_{(w,R)}(z)|^q}{e^{q|z|^m}}dA(z)\bigg)^{\frac{1}{q}}\nonumber\\
 \geq \limsup_{|w|\to \infty} \frac{1}{\tau_m^{\frac{2}{p}}(w)} \bigg( \int_{D(w,\sigma \tau_m(w))} |f'_{(w,R)}(z)|^qe^{-q |z|^m}dA(z)\bigg)^{\frac{1}{q}}\nonumber
 \end{align*} for some small  positive number $\sigma$.  %not bigger than $1/4$.
  An application of \eqref{test0}  and also  setting $ m= 2-\frac{pq}{pq+q-p}$ imply
 that the last term above is comparable to
 \begin{align*}
 \limsup_{|w|\to \infty}  \frac{1}{\tau_m(w)^{\frac{2}{p}}}\bigg(\int_{D(w, \sigma \tau_m(w))} m^q(1+|z|)^{q(m-1)}dA(z)\bigg)^{\frac{1}{q}}\quad \quad \quad \quad \quad \quad \nonumber\\
 \simeq m |m^2-m|^{\frac{1}{p}}\limsup_{|w|\to \infty}  (1+|w|)^{m-1+ \frac{(m-2)(q-p)}{pq}} \simeq \big |m^{2+p}-m^{1+p}\big|^{\frac{1}{p}}
 \end{align*}    which  completes the proof of the lower estimate in \eqref{estimate}.
\subsection{Proof of the upper estimates in \eqref{estimate}} For this, we may apply Proposition~\ref{compact} and  consider  a sequence of compact composition operators  $C_{\Phi_k}$ where  $\Phi_k(z)=\frac{k}{k+1} z$ for each $k\in \NN$.  Since  $D$ is bounded, then $D\circ C_{\Phi_k}:\mathcal{F}_{(m,p)} \to \mathcal{F}_{(m,q)}$ also  constitutes a sequence of compact operators. Then we may consider two different cases.

\emph{Case 1:} If  $q= \infty,$ then  we have
\begin{align}
\label{sum0}
\|D\|_e \leq \|D-D\circ C_{\Phi_k}\|= \sup_{\|f\|_{(m,p)}\leq 1} \|(D-D\circ C_{\Phi_k})f\|_{(m,\infty)}\quad \quad \quad\nonumber\\
\simeq \sup_{\|f\|_{(m,p)}\leq 1}\   \sup_{|z|>r}\Big|f'(z)- \frac{k}{k+1}f'(\Phi_k(z))\Big|e^{-|z|^m}\nonumber\\
+\sup_{\|f\|_{(m,p)}\leq 1} \ \sup_{|z|\leq r}\Big|f'(z)-\frac{k}{k+1}f'(\Phi_k(z))\Big|e^{-|z|^m}
\end{align} for a certain fixed positive number $r$.   If  in addition $p= \infty$,  the first summand above is bounded by
\begin{align*}
\sup_{\|f\|_{(m,p)}\leq 1}\   \sup_{|z|>r}\Big( \frac{k}{k+1}\Big|f'(z)- f'(\Phi_k(z))\Big|e^{-|z|^m}+ \frac{1}{k+1} |f'(z)|e^{-|z|^m}\Big)\quad \quad \quad \quad \quad \nonumber\\
\leq \sup_{\|f\|_{(m,\infty)}\leq 1}   \sup_{|z|>r}m(1+|z|)^{m-1}\Big( \frac{\big|f'(z)- f'(\Phi_k(z))\big|e^{-\psi(z)}}{m(1+|z|)^{m-1}}+ \frac{1}{k+1} \frac{|f'(z)|e^{-|z|^m}}{m(1+|z|)^{m-1}}\Big) \nonumber\\
\leq  \sup_{|z|>r} m(1+|z|)^{m-1} +  \frac{1}{k+1} \sup_{|z|>r}  m(1+|z|)^{m-1}\lesssim \sup_{|z|>r} m(1+|z|)^{m-1} = 1,
\end{align*} where the last equality follows when we set $m= 1.$

 Similarly,  if  $1\leq p <\infty$, then  it follows from  \eqref{pointwise}  and eventually setting $m= 2-\frac{p}{p+1}$   that the first summand  in \eqref{sum0} is  bounded by
\begin{align}
\label{last}
\sup_{\|f\|_{(m,p)}\leq 1}  \sup_{|z|>r}  \frac{1}{ \tau_m^{\frac{2}{p}}(z)}\bigg( \int_{D(z, \sigma \tau_m(z))}\bigg(\frac{k^p \big|f'(w)- f'(\Phi_k(w))\big|^p + |f'(w)|^p}{e^{p|z|^m}(k+1)^p}\bigg)  dA(w)\bigg)^{\frac{1}{p}}\quad\quad\quad\quad \quad \quad\nonumber\\
\lesssim \sup_{\|f\|_{(m,p)}\leq 1}  \sup_{|z|>r}  \|f\|_{(m,p)}\bigg( \frac{ m(1+|z|)^{m-1}(k+1)}{ \tau_m^{\frac{2}{p}}(z)}\bigg) \quad\quad\quad \quad \quad\quad\quad\quad \quad \quad\quad\quad\quad\quad \quad \quad\nonumber\\
\leq|m^{2+p}-m^{1+p}|^{\frac{1}{p}}\sup_{|z|>r}  (1+|z|)^{m-1+ \frac{m-2}{p}}\Big(  \frac{k+2}{k+1}\Big)\quad\quad\quad\quad \quad \quad \quad \quad \quad\quad \quad\nonumber\\
 \lesssim |m^{2+p}-m^{1+p}|^{\frac{1}{p}}\sup_{|z|>r}  (1+|z|)^{m-1+ \frac{m-2}{p}}\simeq |m^{2+p}-m^{1+p}|^{\frac{1}{p}}.\quad \quad \quad\quad \quad \quad\quad
\end{align}
As for the second summand in \eqref{sum0}, we observe  that by integrating the function $f''$ along
 the radial segment $ [\frac{kz}{k+1}z, z]$   we find
\begin{align*}
\bigg|f'(z)-f'\Big(\frac{k}{k+1}z\Big)\bigg| \leq \frac{|z||f''(z^*)|}{k+1}
\end{align*}  for some $z^*$ in  the radial segment $ [\frac{kz}{k+1}z, z]$. By Cauchy estimate's for $f''$, we  also have
\begin{align*}
|f''(z^*)| \leq \frac{1}{r} \max_{|z|=2r} |f'(z)|,
\end{align*}
and hence
\begin{align}
\label{cauchy}
\bigg|f'(z)- \frac{k}{k+1}f'\Big(\frac{k}{k+1}z\Big)\bigg| e^{-|z|^m} \leq \frac{k}{k+1}\Big|f'(z)- f'\big(\frac{k}{k+1}z\big)\Big| e^{-|z|^m}\nonumber\\
+ \frac{|f'(z)| e^{-|z|^m}}{k+1}
\lesssim\frac{|z|e^{-|z|^m}}{r(k+1)}  \max_{|z|=2r} |f'(z)|  + \frac{|f'(z)| e^{-|z|^m}}{k+1}
\end{align} from which and if $p=\infty$ and applying \eqref{norm}, then the second summand in \eqref{sum0} is bounded by
\begin{align*}
\sup_{\|f\|_{(m,\infty)}\leq 1} \ \sup_{|z|\leq r}\bigg(\frac{|z|e^{-|z|^m}}{r(k+1)}  \max_{|z|=2r} |f'(z)|  + \frac{|f'(z)| e^{-|z|^m}}{k+1}\bigg)
\lesssim \frac{m(1+|r|)^{m-1}}{k+1}  = \frac{1}{k+1} \to 0
\end{align*} as $ \ k \to \infty$ and when  we set $m= 1$ here again.

On the other hand, if $1\leq p<\infty,$ we may further make some estimations in \eqref{cauchy}. By \eqref{pointwise} and \eqref{paley} and eventually setting $m= 2-\frac{p}{p+1}= \frac{p+2}{p+1}$, we  have
\begin{align*}
\max_{|z|=2r} |f'(z)|\lesssim \max_{|z|=2r}\frac{ e^{|z|^m} m (1+|z|)}{\tau_m^{\frac{2}{p}}(z)} \Bigg(\int_{D(z, \sigma\tau_m(z))}\frac{ |f'(w)|^p e^{-p|w|^m}}{m^p (1+|w|)^p} dA(w)\Bigg)^{\frac{1}{p}}\nonumber\\
\lesssim \|f\|_{(m,p)} \max_{|z|=2r}\frac{ e^{|z|^m} m |m^2-m|^{\frac{1}{p}}(1+|z|)^{m-1}}{\tau_m^{\frac{2}{p}}(z)} \lesssim \|f\|_{(m,p)} e^{(2r)^m} m(1+|r|)^{m-1+ \frac{m-2}{p}}\nonumber\\
= \|f\|_{(m,p)} e^{(2r)^m} |m^{2+p}-m^{1+p}|^{\frac{1}{p}}.\quad \quad \quad \quad \quad \quad \quad \quad
\end{align*}
Now combining all the above estimates, we see that the second piece of the sum in \eqref{sum0} is bounded by
\begin{align*}
\sup_{\|f\|_{(m,p)}\leq 1} \ \sup_{|z|\leq r}\big|f'(z)-\frac{k}{k+1}f'(\Phi_k(z))\big|e^{-|z|^m}
\lesssim \frac{p+2}{(k+1)(p+1)}\frac{\sup_{\|f\|_{(m,p)}\leq 1}\|f\|_{(m,p)}}{  e^{-(2r)^m}}\nonumber\\
 \leq \frac{1}{k+1} e^{(2r)^m} \rightarrow 0 \ \text{as} \ k \rightarrow \infty,
\end{align*}
 from which, \eqref{last}, $m=\frac{p+2}{p+1}$  and since $r$ is arbitrary, we  deduce
\begin{align*}
\|D\|_{e}\lesssim \sup_{|z|>r}  |m^{2+p}-m^{1+p}|^{\frac{1}{p}}(1+|z|)^{m-1+ \frac{m-2}{p}}\quad \quad \quad \quad \quad \quad \quad \quad \quad \quad \quad \quad \quad \quad \quad \quad\nonumber\\
= |m^{2+p}-m^{1+p}|^{\frac{1}{p}} \limsup_{|z| \to \infty} (1+|z|)^{m-1+ \frac{m-2}{p}}=|m^{2+p}-m^{1+p}|^{\frac{1}{p}} \quad \quad \quad\quad
\end{align*} and completes the first case.

\emph{Case 2: }When   $1\leq p\leq q <\infty$, then  we argue as follows. We may first estimate
\begin{align}
\label{upper1}
\|D\|_e\leq \|D-D\circ C_{\Phi_k}\|= \sup_{\|f\|_{(m,p)}\leq 1} \|(D-D\circ C_{\Phi_k})f\|_{(m,q)}\quad \quad \quad \nonumber\\
\simeq \sup_{\|f\|_{(m,p)}\leq 1} \Bigg(\int_{\CC}\big|f'(z)-f'(\Phi_k(z)) \Phi'_k(z)\big|^q e^{-q|z|^m}dA(z)\Bigg)^{\frac{1}{q}}.
\end{align}
Applying Lemma~\ref{lem4} and estimate  \eqref{pointwise}, we get
 \begin{align}
 \int_{\CC}\frac{\big|f'(z)-\frac{k}{k+1}f'(\Phi_k(z))\big|^q}{ e^{q|z|^m}}dA(z)\leq \sum_j \int_{D(z_j, \sigma \tau_m(z_j))}\frac{ \big|f'(z)-\frac{k}{k+1}f'(\Phi_k(z))\big|^q}{  e^{q|z|^m}}dA(z)\quad \quad \quad \quad \quad \quad\quad \quad \quad \quad \quad \nonumber\\
 \lesssim \sum_j  \int_{D(z_j, \sigma \tau_m(z_j))}\Bigg( \int_{D(z, \sigma \tau_m(z))}  \frac{\big|f'(w)-\frac{k}{k+1}f(\Phi_k(w))\big|^p}{ e^{p|w|^m}} dA(w)\Bigg)^{\frac{q}{p}} \frac{dA(z)}{\tau_m^{\frac{2q}{p}}(z)}  \quad \quad  \quad \quad \quad \quad \quad\quad \quad\quad \quad \nonumber\\
 \lesssim  \sum_j  \Bigg( \int_{D(z_j, 3\sigma\tau_m(z_j))} \frac{\big|f'(w)-\frac{k}{k+1}f'(\Phi_k(w))\big|^p}{ e^{p |w|^m}m^p(1+|w|)^{p(m-1)}} dA(w) \Bigg)^{\frac{q}{p}} \quad \quad \quad \quad \quad\quad\quad \quad \quad \quad \quad\quad\nonumber\\
\times \int_{D(z_j, \sigma\tau_m(z_j))} \frac{m^q(1+|z|)^{q(m-1)}}{\tau_m^{\frac{2q}{p}}(z)} dA(z). \quad \quad  \quad \quad\quad \quad \quad \quad  \quad \quad\quad \quad \nonumber
\end{align}
We spilt now  the above sum as
%\vspace{-0.1in}
\begin{align}
\label{sum}
\sum_j = \sum_{j: |z_j|>r} +\sum_{j: |z_j|\leq r}
\end{align} for some fixed positive number $r$ again. Then  applying
  Minkowski inequality (since $q\geq p,$) and the finite multiplicity $N$ of the covering sequence $D(z_j, 3\sigma\tau(z_j))$ and setting $m=2-\frac{pq}{pq+q-p} $,  the first sum is bounded by
\begin{align*}
\sup_{j: |z_j| >r }\Bigg(\int_{D(z_j, \sigma \tau_m(z_j))} \frac{m^q(1+|z|)^{q(m-1)}}{\tau_m(z)^{\frac{2q}{p}}} dA(z)\Bigg)  \quad \quad \quad \quad \quad \quad \quad \quad \quad\quad \quad \quad  \quad \quad\quad \quad \quad \quad \quad \quad \quad \quad\quad \nonumber\\
 \times  \Bigg(  \int_{D(z_j, 3\sigma\tau_m(z_j))} \frac{\big|f'(w)-\frac{k}{k+1}f'(\Phi_k(w))\big|^p}{ e^{p |w|^m}m^p(1+|w|)^{p(m-1)}} dA(w) \Bigg)^{\frac{q}{p}}\quad \quad \quad \quad\quad \quad \quad \quad \quad \quad \quad  \nonumber\\
\lesssim \|f\|_{(m,p)}^q \Big(1+ \frac{1}{k+1}\Big) \sup_{|z_j| >r }  \int_{D(z_j, \sigma \tau_m(z_j))} \frac{m^q(1+|z|)^{q(m-1)}}{\tau_m(z)^{\frac{2q}{p}}} dA(z).\quad \quad \quad \quad\quad \quad \quad \quad \quad  \nonumber\\
\simeq  \|f\|_{(m,p)}^q \sup_{j: |z_j| >r } m^q|m^2-m|^{\frac{q}{p}} (1+|z_j|)^{q(m-1)+ (m-2) \frac{q-p}{p}}\quad \quad\quad \quad \quad \quad\quad \quad\quad \quad\quad  \nonumber\\
\lesssim m^q|m^2-m|^{\frac{q}{p}} \sup_{j: |z_j| >r }  (1+|z_j|)^{q(m-1)+ (m-2) \frac{q-p}{p}} =m^q|m^2-m|^{\frac{q}{p}}  \quad\quad \quad \quad \quad\quad \quad\quad
\end{align*} where  we, in particular, used  that $\|f\|_{\mathcal{F}_p^\psi}\leq 1$.

We plan to show that  the second sum in \eqref{sum} tends to zero when $k \to \infty$. Then since $r$ is arbitrary,  our upper estimate will follow from  the series of estimates we made  starting from \eqref{upper1}. To this end, as done before, making use of \eqref{cauchy} and Minkowski inequality again,  we estimate
\begin{align*}
%\label{last}
\sum_{j: |z_j|\leq r}  \Bigg( \int_{D(z_j, 3\sigma\tau_m(z_j))} \frac{ \big|f'(w)-\frac{k}{k+1}f'(\Phi_k(w))\big|^p}{m^p(1+|w|)^{p(m-1)} e^{p|w|^m}} dA(w) \Bigg)^{\frac{q}{p}}\quad \quad \quad \quad\quad \quad\quad \quad \nonumber\\
\times \int_{D(z_j, \sigma \tau_m(z_j))} \frac{m^q(1+|z|)^{q(m-1)}}{\tau_m^{\frac{2q}{p}}(z)}dA(z)\quad  \nonumber\\
\lesssim \Bigg(\sum_{j: |z_j|\leq r}   \int_{D(z_j, 3\sigma\tau_m(z_j))} \frac{|w|^p \big(\max_{|w|=2r} |f'(w)|\big)^p+ |f'(w)|^p }{r(k+1)^p (m^p(1+|w|)^{p(m-1)})e^{p|w|^m} } dA(w) \Bigg)^{\frac{q}{p}} \nonumber\\
\times
\int_{D(z_j, \sigma \tau_m(z_j))}\frac{m^q(1+|z|)^{q(m-1)}}{\tau_m^{\frac{2q}{p}}(z)} dA(z).
\end{align*}
Now we also have
\begin{align*}|w|\leq |w-z_j| +|z_j| \leq r+ \sigma \tau_m(z_j) \leq r+ \delta \sup_{|z_j|\leq r}\tau_m(z_j)
\lesssim r+ \delta r^{\frac{2-m}{2}}\leq 2r\end{align*} from which  we have that
the  preceding sum  is bounded  by
\begin{align*}
 \frac{\|f\|_{(m,p)}^q }{(1+k)^q} \sup_{|z_j|\leq r} \int_{D(z_j, \sigma \tau_m(z_j))}\frac{m^q(1+|z|)^{q(m-1)}}{\tau_m(z)^{\frac{2q}{p}}} dA(z)\quad \quad \quad \quad \quad \quad \quad \quad \quad \quad \quad \quad \quad \quad \quad\quad \quad \quad  \quad \quad\quad \quad \quad \quad\quad \quad\nonumber\\
 \lesssim  \frac{m^q|m^2-m|^{\frac{q}{p}}  }{(1+k)^q}\sup_{j: |z_j| \leq r }  (1+|z_j|)^{q(m-1)+ (m-2) \frac{q-p}{p}}
\lesssim  \frac{m^q|m^2-m|^{\frac{q}{p}}}{(1+k)^q}\rightarrow 0 \ \ \text{as} \ \ k\rightarrow \infty, \quad \quad \quad \quad\quad \quad\quad \quad \quad \quad \quad \quad\quad \quad \quad
 \end{align*} where the last inequality follows after setting $m= 2-\frac{pq}{pq+q-p}$ again. From this and series of estimates made above  we deduce that
 \begin{align*}
  \|D\|_{e}\lesssim  |m^{2+p}-m^{1+p}|^{\frac{1}{p}}\sup_{j: |z_j| >r } (1+|z_j|)^{(m-1)+ (m-2) \frac{q-p}{qp}}= |m^{2+p}-m^{1+p}|^{\frac{1}{p}}.
  \end{align*}

\end{document}